\newtheorem{theorem}{Theorem}[section]
\newtheorem{lemma}[theorem]{Lemma}
\newtheorem{proposition}[theorem]{Proposition}
\newtheorem{corollary}[theorem]{Corollary}
\newtheorem{rem}{Remark}
\newtheorem{assumption}{Assumption}
\newcommand{\henrik}[1]{\ifthenelse{\boolean{showcomments}}
{\textcolor{Blue}{Henrik says: #1}}{}}
\newcommand{\emma}[1]{\ifthenelse{\boolean{showcomments}}
{\textcolor{Green}{Emma says: #1}}{}}
\newcommand{\martin}[1]{\ifthenelse{\boolean{showcomments}}
{\textcolor{Skyblue}{Martin says: #1}}{}}
\newcommand{\john}[1]{\ifthenelse{\boolean{showcomments}}
{\textcolor{Red}{John says: #1}}{}}
\newcommand{\newtext}[1]{\ifthenelse{\boolean{shownew}}
{\textcolor{Red}{#1}}{}}
\newcommand{\hn}{$\mathcal{H}_2$ }
\newcommand{\tr}[1]{\ensuremath{#1^\top}}
\newcommand{\mat}[1]{#1}
\newcommand{\inv}[1]{\mat{#1}^{\raisebox{.2ex}{$\scriptscriptstyle-1$}}}
\newcommand{\DAPI}{\mathrm{DAPI}}
\newcommand{\CAPI}{\mathrm{CAPI}}
\newcolumntype{L}[1]{>{\raggedright\let\newline\\\arraybackslash\hspace{0pt}}m{#1}}
\newcolumntype{C}[1]{>{\centering\let\newline\\\arraybackslash\hspace{0pt}}m{#1}}
\newcolumntype{R}[1]{>{\raggedleft\let\newline\\\arraybackslash\hspace{0pt}}m{#1}}
\definecolor{gray3}{rgb}{0.75, 0.75, 0.75}
\definecolor{gray2}{rgb}{0.5, 0.5, 0.5}
\definecolor{gray1}{rgb}{0.25, 0.25, 0.25}
\definecolor{gray0}{rgb}{0.15, 0.15, 0.15}
\title{\LARGE \bf  Performance Limitations of Distributed Integral Control \\in Power Networks Under Noisy Measurements}
\author{Hendrik Flamme, Emma Tegling and Henrik Sandberg %
\thanks{The authors are with the School of Electrical Engineering and the ACCESS Linnaeus Center, KTH Royal Institute of Technology, SE-100 44 Stockholm, Sweden {(\tt flamme, tegling, hsan@kth.se)}.}  \thanks{ Corresponding author: E. Tegling.} \thanks{This work was supported in part by the Swedish Research Council through grants 2013-5523 and 2016-00861.}
 }%
\begin{document}
\maketitle
\thispagestyle{empty}
\pagestyle{empty}

\begin{abstract}
Distributed approaches to secondary frequency control have become a way to address the need for more flexible control schemes in power networks with increasingly distributed generation. 
The distributed averaging proportional-integral (DAPI) controller presents one such approach. In this paper, we analyze the transient performance of this controller, and specifically address the question of its performance under noisy frequency measurements. Performance is analyzed in terms of an \hn norm metric that quantifies power losses incurred in the synchronization transient. While previous studies have shown that the DAPI controller performs well, in particular in sparse networks and compared to a centralized averaging PI (CAPI) controller, our results prove that additive measurement noise may have a significant negative impact on its performance and scalability. This impact is shown to decrease with an increased inter-nodal alignment of the controllers' integral states, either through increased gains or increased connectivity. For very large and sparse networks, however, the requirement for inter-nodal alignment is so large that a CAPI approach may be preferable. Overall, our results show that distributed secondary frequency control through DAPI is possible and may perform well also under noisy measurements, but requires careful tuning.

\end{abstract}


\section{Introduction}
Many of today's electric power networks are undergoing a paradigm shift, where local, small-scale generation resources are increasingly replacing large-scale centralized power plants~\cite{Farhangi2010, Milligan2015}. This has motivated considerable research efforts in developing more flexible and scalable distributed schemes for generation planning and frequency control. For example, new optimization techniques have been proposed in~\cite {Zhao2014Design, Mallada2014Optimal,Zhang2015realtime, Li2016}, which rather than the traditional economic dispatch exploit the system's frequency dynamics and the possibility of load-side frequency control. 

A second line of research has focused on secondary frequency control, the objective of which is to restore the system's nominal frequency by adjusting generator set-points. This is traditionally handled by a centralized automatic generation control (AGC), but integral control strategies with various degrees of de-centralization have been proposed in the last years~\cite{Shafiee2014, Andreasson2014ACC, SimpsonPorco2013, Dorfler2014, ZhaoMallada2015, Trip2016Internal}. With the right architecture, these controllers can also guarantee optimality in the generators' power injections~\cite{SimpsonPorco2013, Andreasson2014ACC}
, further motivating their use in a distributed generation setting. 

In this paper, we will focus on one such integral controller that solves the secondary frequency control problem in a distributed fashion. It appends to each local frequency droop controller an integral controller, and combines it with an alignment of the integral state with neighboring nodes through a distributed averaging filter. This \emph{distributed averaging proportional-integral (DAPI)} controller has been advocated in e.g.~\cite{SimpsonPorco2013, Andreasson2014ACC, Andreasson2014TAC} and has since then been analyzed with regards to optimal design in~\cite{Wu2016} and for performance in~\cite{Tegling2016ACC, Tegling2017, Andreasson2017CDC}. 

This latter series of work has in particular shown that the DAPI controller improves the system's transient performance compared to standard droop control, and also outperforms the \emph{centralized averaging PI (CAPI)} controller; a centralized integral controller resembling the traditional AGC. This partly counterintuitive result has been attributed to the DAPI controller's ability to detect and attenuate local frequency deviations locally, preventing them from giving rise to large non-equilibrium power flows~\cite{Tegling2016ACC}. 
An important question that arises is therefore what happens to the DAPI controller's superior performance if these local frequency measurements are not perfect, but subject to measurement noise. 

This question has motivated the present work, where we investigate the impact of noisy frequency measurements on   performance. In line with~\cite{Tegling2016ACC}, we evaluate performance in terms of resistive power losses that are incurred in regulating the frequency, when the system is subject to persistent small disturbances due to power imbalances. These transient power losses can be quantified through an \hn norm of an input-output system that describes the generator dynamics, with a suitably defined performance output.  Our evaluation shows that the explicit inclusion of frequency measurement noise has a significant impact on the relative performance of the DAPI and the CAPI controllers. 

It turns out that the measurement noise gives rise to its own contribution to the DAPI controlled system's expected transient power losses. For many control designs, this contribution will be smaller than the losses arising due to power imbalances, and simply lead to a small shift in the optimal controller configuration. However, we show that the noise contribution has an unfavorable scaling with network size. This affects the scalability of the DAPI controller, which in large sparse networks may incur much larger losses than the CAPI controller, whose performance is unaffected by the measurement noise. Still, we show that it is always possible to tune the DAPI controller so that it performs better than the CAPI controller for any given network. Overall, this means that the DAPI controller must be very carefully tuned if it is subject to measurement noise, and this tuning may need to be adjusted subject to the scaling of the network.

Most of the remainder of this paper is devoted to analyzing the additional transient power losses that arise due to measurement noise and its dependence on properties of the network and the controller. After introducing the problem setup in Section~\ref{sec:setup}, we present and analyze the closed-form formulae for the performance of both the DAPI and CAPI controllers with and without measurement noise in Section~\ref{sec:noise}. In Section~\ref{sec:scalability} we discuss the scaling of the DAPI-controller's losses, and prove that even though it may be unfavorable, an adjustment of the distributed averaging filter gains can mitigate the loss increase. We conclude the paper in Section~\ref{sec:conclusions}.

\section{Problem Setup}
\label{sec:setup}
We now introduce the linearized dynamics for the droop-controlled power network, along with the two secondary frequency controllers; DAPI and CAPI. We also introduce a model for the measurement noise arising in these controllers. 

\subsection{Definitions}
Consider a network modeled by the symmetric, weighted, connected graph $\mathcal{G}^P = \{\mathcal{V}, \mathcal{E}^P\}$, where $\mathcal{V} = \{1, \ldots, n\}$ is the set of nodes and $\mathcal{E}^P \subset \mathcal{V} \times \mathcal{V} $  
is the set of edges, or  network lines. Each network line has an associated constant admittance $y_{ji} = y_{ij} = g_{ij} - \mathbf{j}b_{ij}$, where $g_{ij}, b_{ij} > 0$ can be regarded as edge weights. Let $\mathcal{N}_i^P:=\{j\in\mathcal{V}:(i,j)\in\mathcal{E}^P\}$ be the set of all nodes that are connected to node~$i$ in the graph $\mathcal{G}^P$. Then, by definition, $b_{ij}=g_{ij}=0~\forall j\not\in\mathcal{N}_i^P$.  In this paper, we consider a Kron-reduced network model (see e.g. \cite{Varaiya1985}), {in which constant-impedance loads have been eliminated through a reduction procedure and their effect absorbed into} the network line models in~$\mathcal{E}^P$. Consequently, every node $i \in \mathcal{V}$ represents a generation unit (or synchronous load) with an associated phase angle~$\theta_i$ and voltage magnitude $|V_i|$.

For a graph $\mathcal{G} = \{\mathcal{V}, \mathcal{E}\}$ where each edge $(i,j)\in \mathcal{E}$ has an associated weight $\mathrm{w}_{ij} = \mathrm{w}_{ji}>0$, we define the weighted graph Laplacian matrix $\mathcal{L}_W$ through $[\mathcal{L}_W]_{ij} = -\mathrm{w}_{ij}$ if $i \neq j$ and $[\mathcal{L}_W]_{ii} = \sum_{j \in \mathcal{N}_i} \mathrm{w}_{ij}$. Note that the Laplacian $\mathcal{L}_W$ is positive semidefinite.

We denote the transpose of an arbitrary matrix $A$ by $\tr{A}$. The identity matrix is denoted $I$, and a column vector with all components equal to $1$ is denoted $\mathbf{1}$. 

\subsection{Droop controlled power network}
Under standard droop control, each generator $i$ is assumed to obey the swing equation~(see~\cite{machowski2008power})
\begin{equation}
\label{eq:swing}
m_i\ddot{\theta}_i + d_i(\dot{\theta}_i - \omega^{\mathrm{ref}}) = -\sum_{j=1}^n b_{ij} \sin(\theta_i - \theta_j)+P_i +u_i,~ 
\end{equation}
where $\theta_i$ is the voltage phase angle and $\dot{\theta}_i - \omega^\text{ref}=: \omega_i$ is the frequency deviation at node~$i$, in which $\omega^\text{ref}$ is the nominal frequency (typically 50 Hz or 60 Hz). 
The constants~$m_i$ and $d_i$ are, respectively, the inertia and damping (droop) coefficients, and $P_i$ is the net electric power injected or drawn at node $i$. We denote by $u_i$ the input from the secondary controller, to be introduced shortly. To simplify notation, we have omitted the time-dependence of the states throughout, e.g., $\theta_i(t)$ is denoted $\theta_i$. 

This paper is concerned with a small-signal analysis of the system~\eqref{eq:swing}, and we therefore linearize the system  around the equilibrium where $\theta_i = \theta_j$. Then, by making use of the shifted frequency $\omega_i$ and by defining the droop gain $k_i: = 1/d_i$ and the constant $\tau_i = m_i/d_i$, we obtain the linearized swing dynamics as
\begin{subequations}
\label{eq:swing2}
\begin{align}
\dot{\theta}_i &= \omega_i \\[-3\jot]
\tau_i\dot{\omega}_i &=-\omega_i - k_i \sum_{j=1}^n b_{ij}(\theta_i - \theta_j) + P_i + u_i.
\end{align}
\end{subequations}
The net power injection $P_i$ will, in line with~\cite{Tegling2014, Tegling2016ACC} be modeled as a white stochastic disturbance input that is uncorrelated across the nodes\footnote{This assumption implies that $P$ is a white second-order process with $\mathbb{E}\{P_i(t')\tr{P}_i(t)\} = \delta(t-t')I_i$, where $\delta(t)$ is the Dirac delta function, and without loss of generality we have assumed unit intensity. } (see also Section~\ref{sec:noise}). Thus, $P_i$ captures random fluctuations in generation and load. We remark, however, that the performance analysis performed here is relevant also under other input scenarios, see~\cite{Tegling2014}.

%

\subsection{Secondary frequency control}
A secondary controller input is applied in order to eliminate any stationary frequency errors that arise in standard droop control. This is achieved through integral action. In this paper, we consider two integral controllers that were also analyzed in~\cite{Tegling2016ACC,Andreasson2017CDC}:
\subsubsection{DAPI control}
With distributed averaging PI (DAPI) control, an individual integral state $\Omega_i$ is kept at each node, that can be thought of as tracking the local phase angle deviation. To avoid destabilizing individual drifts in these states, a distributed averaging over them takes place over an additional \emph{communication layer} that is introduced on top of the physical power network layer. This communication layer is modeled by the symmetric, weighted, connected graph $\mathcal{G}^C=(\mathcal{V},\mathcal{E}^C)$, with edge weights given by the interaction strengths $c_{ij} > 0$ for $(i,j) \in \mathcal{E}^C$. 
An illustration is given in Fig. \ref{fig:communicationlayer}. The controller becomes
\begin{subequations}
	\label{eq:DAPI-control-law}
\begin{align}
	u_i^\mathrm{DAPI} &= \Omega_i \\[-2\jot]
	q_i\dot{\Omega}_i &= -\hat{\omega}_i -\sum_{j=1}^n c_{ij} (\Omega_i-\Omega_j),
\end{align}
\end{subequations}
where $\hat{\omega}_i$ is the frequency measured by the controller and $q_i>0$ is a controller gain. Note that $c_{ij}=0$ if $(i,j) \not\in \mathcal{E}^C$. 
\begin{figure}
	\centering
	\includegraphics[width=0.4\textwidth, trim=0cm 7.5cm 0cm 0cm,clip]{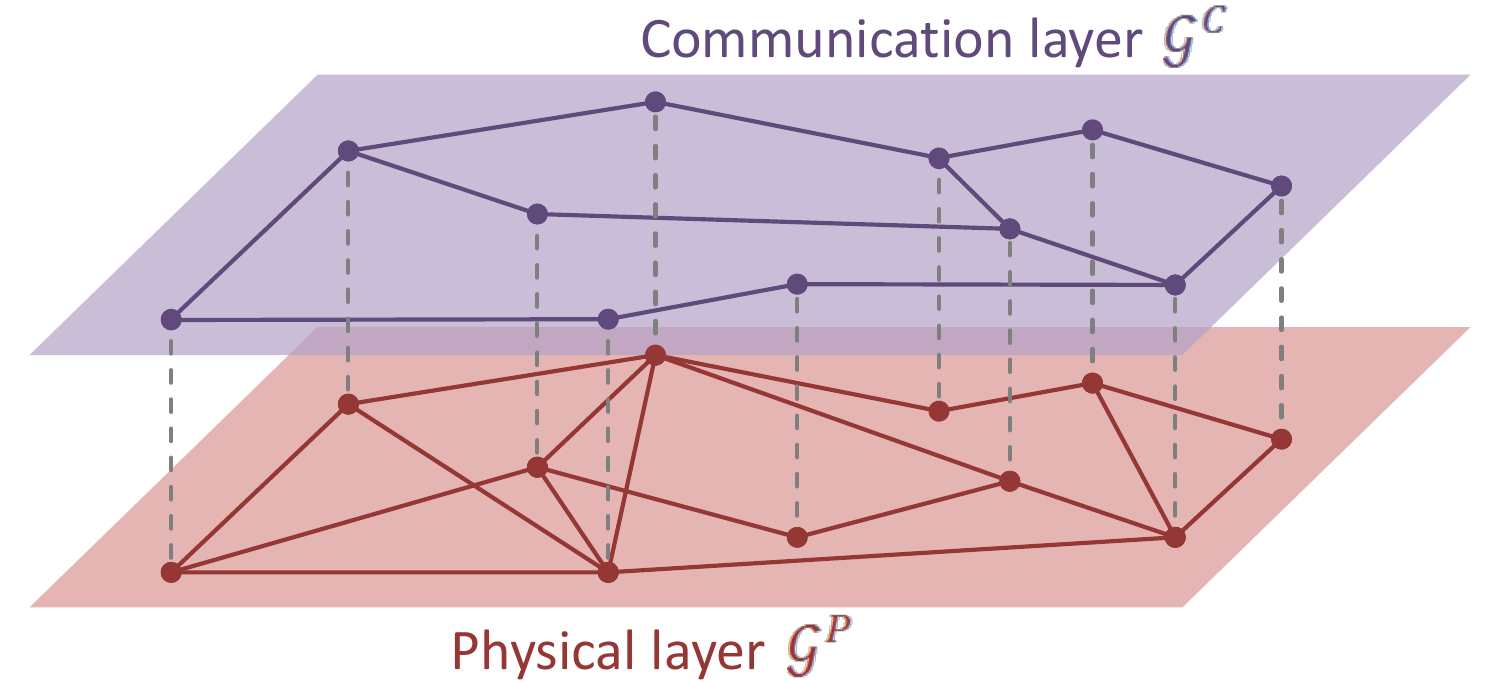}
	\caption{Illustration of the two graph topologies in the DAPI controller. The physical layer $\mathcal{G}^P$ describes the Kron-reduced power network, while the distributed averaging of integral states uses the communication layer $\mathcal{G}^C$.}
	\label{fig:communicationlayer}
\end{figure}

\subsubsection{CAPI control}
In centralized averaging PI (CAPI) control the average phase angle deviation of the entire network is tracked centrally, and subsequently distributed to the individual controllers. The control signal is computed as follows:
\begin{subequations}
	\label{eq:CAPI-control-law}
\begin{align}
	u_i^\mathrm{CAPI}  &= \Omega \\[-2\jot]
	q~\dot{\Omega} &= -\frac{1}{n} \sum_{i=1}^n \hat{\omega}_i
\end{align}
\end{subequations}
where $q>0$ is a constant gain. 

We remark that the DAPI controller corresponds to the CAPI controller in the limit of infinite interaction gains~$c_{ij}$. 

\subsection{Noise model}
\label{sec:noise}
In previous analyses of the performance of secondary frequency control such as \cite{Tegling2016ACC, Andreasson2017CDC, Tegling2017}, it has been assumed that the controller has access to a noiseless measurement of local frequency, so that $\hat{\omega}_i = \omega_i$ in~\eqref{eq:DAPI-control-law} and \eqref{eq:CAPI-control-law}. 
As such, the only disturbance to the system has been~$P_i$, which is due to generation and load fluctuations. In this paper, we will assume imperfect frequency measurements and therefore introduce the additional noise term $\eta_i$, which is modeled as additive white measurement noise on the frequency, so that 
\begin{equation}
\label{eq:noise}
\hat{\omega}_i = \omega_i + \eta_i.
\end{equation}
This noise does, as will see, have a large impact on the relative performance of the secondary controllers.


As already mentioned, we model the power injection fluctuations $P_i$ as uncorrelated white disturbance inputs. We choose to relate its intensity to the white measurement noise~$\eta_i$ through the constant $\varepsilon >0$, so that
\[ \mathbb{E}\{\eta(t') \tr{\eta}_i(t)\} = \varepsilon\mathbb{E}\{P(t')\tr{P}_i(t)\},\] where $P = \tr{\begin{bmatrix}
P_1,\ldots,P_n
\end{bmatrix}}$ and $\eta = \tr{\begin{bmatrix}
\eta_1,\ldots,\eta_n
\end{bmatrix}}$. This allows us to define the process $w \in \mathbb{R}^{2n}$ with $\mathbb{E}\{w(t') \tr{w}(t)\} = \delta(t - t') I$, and construct the input vector 
\begin{align}
\label{eq:input}
	\begin{bmatrix} 
		P \\ \eta 
	\end{bmatrix}  
	= 
	\begin{bmatrix}
		I & 0 \\ 0 & \varepsilon I
	\end{bmatrix} 
	w.
\end{align}


\subsection{Closed-loop system dynamics}
By substituting the secondary controller inputs $u_i^\mathrm{DAPI}$ from~\eqref{eq:DAPI-control-law} and $u_i^\mathrm{CAPI} $~from~\eqref{eq:CAPI-control-law} along with the measurement noise model~\eqref{eq:noise} into the system dynamics~\eqref{eq:swing2}, and by making use of the input vector in~\eqref{eq:input}, we obtain the closed-loop systems on vector form as:
\begin{equation}
\setlength{\arraycolsep}{2pt}
		\begin{bmatrix}
			\dot{\theta} \\ T\dot{\omega} \\ Q\dot{\Omega}
		\end{bmatrix}	
	\!\!	= \!\!
		\begin{bmatrix}
				0 & ~~I & 0 \\ 
				- \!\mat{K}\!\mathcal{L}_{B} & -\!I & I \\ 
				0 & -\!I & -\! \mathcal{L}_{C} \\ 
		\end{bmatrix} \!\!\!
		\begin{bmatrix}
			{\theta} \\ {\omega} \\ {\Omega}
		\end{bmatrix}
		\! + \!
		\begin{bmatrix}
				0 & 0 \\ 
				I & 0 \\
				0 & \!\varepsilon I
		\end{bmatrix} \!\! w~~~~
		\raisetag{0.5\baselineskip}
		\tag{$\mathcal{S}^\DAPI$}
		\label{eq:DAPIclosedloop}
\end{equation}
for the DAPI controlled system, and
\begin{equation}
	\setlength{\arraycolsep}{2pt}
	\begin{bmatrix}
		\dot{\theta} \\ T\dot{\omega} \\ q\dot{\Omega}
	\end{bmatrix}
\!\!	= \!\!
	\begin{bmatrix}
			0 & ~~I & 0 \\ 
			-\!K\!\mathcal{L}_{B} & -\!I & I \\ 
			0 & -\!\frac{1}{n}\mathbf{1}^{\!\!\top} & 0
	\end{bmatrix}\!\!\!
	\begin{bmatrix}
			{\theta} \\ {\omega} \\ {\Omega}
	\end{bmatrix}
		\! + \!
	\begin{bmatrix}
			0 & 0 \\ 
			I & 0 \\
			0 & \varepsilon \mathbf{1}^{\!\!\top} 
	\end{bmatrix}\!\! w~~~~
	\tag{$\mathcal{S}^\CAPI$}
	\label{eq:CAPIclosedloop}
\end{equation}%
for the CAPI controlled system. At this point, we have introduced the state vectors $\theta = \tr{\begin{bmatrix}
\theta_1,\ldots,\theta_n
\end{bmatrix}}$, $\omega = \tr{\begin{bmatrix}
\omega_1,\ldots,\omega_n
\end{bmatrix}}$, and in the DAPI case $\Omega = \tr{\begin{bmatrix}
\Omega_1,\ldots,\Omega_n
\end{bmatrix}}$ (note that $\Omega$ is scalar in the CAPI case), as well as the matrices $K = \mathrm{diag}\{k_i \}$,  $T = \mathrm{diag}\{\tau_i \}$, and $Q = \mathrm{diag}\{q_i \}$. 
We have also made use of the weighted graph Laplacians $\mathcal{L}_B$ and $\mathcal{L}_C$, where $\mathcal{L}_B$ (the susceptance matrix) represents the graph $\mathcal{G}^P$ with weights $b_{ij}$, and $\mathcal{L}_C$ the graph $\mathcal{G}^C$ with weights $c_{ij}$. 

\vspace{0.5mm}
\begin{rem} 
In this model, we have assumed that the frequency $\omega_i$ enters without noise in the system dynamics~\eqref{eq:swing2}. While this is meaningful in a setting with synchronous machines, a power electronic inverter interfacing, e.g., a renewable generation source, may emulate the swing equation using a noisy measurement of the frequency. In this case, the DAPI controlled system would become
\begin{equation}
\setlength{\arraycolsep}{2pt}
\begin{bmatrix}
\dot{\theta} \\ T\dot{\omega} \\ Q\dot{\Omega}
\end{bmatrix}	
\!\!	= \!\!
\begin{bmatrix}
0 & ~~I & 0 \\ 
- \!\mat{K}\!\mathcal{L}_{B} & -\!I & I \\ 
0 & -\!I & -\! \mathcal{L}_{C} \\ 
\end{bmatrix} \!\!\!
\begin{bmatrix}
{\theta} \\ {\omega} \\ {\Omega}
\end{bmatrix}
\! + \!
\begin{bmatrix}
0 & 0 \\ 
I & \varepsilon I \\
0 & \varepsilon I
\end{bmatrix} \!\! w.~~~~
\tag{$\tilde{\mathcal{S}}^\DAPI$}
\label{eq:additional_noise_term}
\end{equation}
This leads to correlations in the noise input between the input channels. 
These correlations do, however, not affect the qualitative behavior of the system (see the appendix for an elaboration). We therefore limit the upcoming analysis to the uncorrelated input modeled in~\eqref{eq:DAPIclosedloop} and~\eqref{eq:CAPIclosedloop}.
%
%
\end{rem}

\subsection{Performance metric}
The performance of the proposed control strategies will be evaluated using the \emph{price of synchrony} performance metric~\cite{Tegling2014}, in order to facilitate a comparison to the analysis in~\cite{Tegling2016ACC}. This metric measures performance in terms of power losses associated with transient power flows. 
For this purpose, we define a performance output $y$ from the systems~\eqref{eq:DAPIclosedloop} and~\eqref{eq:CAPIclosedloop} as
\begin{align}
\label{eq:output}
	y := \mathcal{L}_G^{1/2}\theta
\end{align}
where the conductance matrix $\mathcal{L}_G$ is the weighted graph Laplacian of the graph $\mathcal{G}^P$ with weights $g_{ij}\ge 0$, and where $\mathcal{L}_G^{1/2}$ is the unique positive semidefinite square root of $\mathcal{L}_G$.

Using this output definition, the squared Euclidean norm of the performance output $\tr{y}y = \tr{\theta} \mathcal{L}_G \theta = \sum_{(i,j) \in \mathcal{E}^P} g_{ij}(\theta_i - \theta_j)^2 $ constitutes the approximate instantaneous power loss of the entire system. Based on that, we obtain a performance measure by applying the $\mathcal{H}_2$ norm; namely, for an input-output stable system $\mathcal{S}$ subject to a white stochastic input $w$ it holds that
\begin{align}
\label{eq:h2norn}
	||\mathcal{S}||_2^2 = \lim\limits_{t\rightarrow \infty} \mathbb{E}\{ \tr{y}(t)y(t) \}.
\end{align}
Hence, the squared $\mathcal{H}_2$ norm can be interpreted as the expected power loss in the presence of persistent disturbances. 

\section{The Impact of Measurement Noise on Performance}
\label{sec:results}
In this section, we compute the performance metric~\eqref{eq:h2norn} for the closed-loop system with the DAPI controller and the CAPI controller, respectively. 
The expression for the squared \hn norm can be split into two parts; one associated with the power injection noise $P$, which was already analyzed in~\cite{Tegling2016ACC}, and one associated with the measurement noise $\eta$. It is the impact of the latter part that is the focus of this paper. 

We begin by stating a number of assumptions under which the closed-form expressions for the \hn norms are derived:
\vspace{0.5mm}
\begin{assumption}[Identical line properties]
All lines have uniform and constant conductance-to-susceptance ratios $\alpha:=\frac{g_{ij}}{b_{ij}}$ such that $\mathcal{L}_G = \alpha \mathcal{L}_B$.
\end{assumption}
\begin{assumption}[Uniform control parameters]
	The generator units and their control parameters are uniform, so that $T=\tau I$, $Q=qI$ and $K=kI$.
\end{assumption}
\begin{assumption}[Communication layer topology] 
	The topology of the communication layer is identical to the physical layer so that $\mathcal{G}^P = \mathcal{G}^C$. The interaction strengths~$c_{ij}$ are characterized through the constant scalar~$\gamma>0$ so that $c_{ij} = \gamma b_{ij}$ for all $(i,j) \in \mathcal{E}^P = \mathcal{E}^C$, yielding $\mathcal{L}_C=\gamma\mathcal{L}_B$.
\end{assumption}
\vspace{1mm}
While these assumptions are made for tractability purposes, they can be physically motivated, see~\cite{Tegling2016ACC}. 

The performance in terms of expected power losses for the DAPI and CAPI controlled systems can now be stated as follows.

\begin{theorem}[Performance under measurement noise]
	\label{prop:mainres}
The \hn norm for the DAPI-controlled system~\eqref{eq:DAPIclosedloop} with the output~\eqref{eq:output} is given by 
\begin{equation}
\label{eq:dapinorm}
||\mathcal{S}^{\DAPI}||_2^2 = ||\mathcal{S}^{\DAPI}_P||_2^2 + ||\mathcal{S}^{\DAPI}_\eta||_2^2, 
\end{equation}
\vspace{-0.2mm}
where 
\vspace{-0.2mm}
\begin{equation}  
\label{eq:dapiPnorm}
||\mathcal{S}^{\DAPI}_P||_2^2 = \frac{\alpha}{2k}
\sum_{i=2}^{n}
\frac{1}{1+ \varphi(\lambda_i,\gamma,k,q,\tau)^{-1}}
\end{equation}
are the expected power losses associated with the power injection noise $P$ and
\begin{align}
	||\mathcal{S}^{\DAPI}_\eta||_2^2 = \varepsilon^2	\frac{\alpha}{2k}\sum_{i=2}^{n}
	\frac{1}{ \gamma\lambda_i}\cdot\frac{1}{1+
	\varphi(\lambda_i,\gamma,k,q,\tau)}
		\label{eq:H^eta}
\end{align}
are the expected losses associated with the frequency measurement noise $\eta$. The function $\varphi$ is defined as 
\begin{equation}
\varphi(\lambda_i,\gamma,k,q,\tau):= \frac{kq^2 \lambda_i + q \gamma \lambda_i + \tau (\gamma \lambda_i)^2}{q + \tau\gamma\lambda_i },
\end{equation}
where $\lambda_i$ with $ 0 = \lambda_1<\lambda_2 \le \ldots \le \lambda_n$ are the eigenvalues of $\mathcal{L}_B$.
The \hn norm for the CAPI-controlled system~\eqref{eq:CAPIclosedloop} is given by
\begin{equation*}
||\mathcal{S}^{\CAPI}||_2^2 = ||\mathcal{S}^{\CAPI}_P||_2^2 = \frac{\alpha}{2k}\left(n-1\right)
, \end{equation*}
and is independent of any measurement noise $\eta$. 
\end{theorem}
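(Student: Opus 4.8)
The plan is to exploit the fact that Assumptions~1--3 make every matrix appearing in~\eqref{eq:DAPIclosedloop} and~\eqref{eq:CAPIclosedloop} a function of the single Laplacian $\mathcal{L}_B$, hence simultaneously diagonalizable. Write $\mathcal{L}_B = V\Lambda\tr{V}$ with $V$ orthogonal and $\Lambda = \mathrm{diag}\{\lambda_1,\dots,\lambda_n\}$, $\lambda_1=0$, and change coordinates in $\theta,\omega,\Omega$ by $V$. Since $\mathcal{L}_C=\gamma\mathcal{L}_B$, $\mathcal{L}_G=\alpha\mathcal{L}_B$, $T=\tau I$, $K=kI$, $Q=qI$, the transformed DAPI system block-diagonalizes into $n$ decoupled third-order subsystems; the $i$-th has modal state $(\hat\theta_i,\hat\omega_i,\hat\Omega_i)$, dynamics parametrized by $\lambda_i$, two independent unit-intensity scalar white-noise inputs $\hat P_i,\hat\eta_i$ (independence and unit intensity follow from the construction of $w$ in~\eqref{eq:input}), and scalar output $\hat y_i=\sqrt{\alpha\lambda_i}\,\hat\theta_i$. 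Because the output weights the $\lambda_1=0$ mode by zero, its $\hat\theta_1$-coordinate is unobservable, while the remaining modal subsystems are Hurwitz for $\lambda_i>0$ (Routh--Hurwitz on the characteristic cubic), so the \hn norm is finite and equals $\sum_{i=2}^{n}$ of the modal \hn norms, each of which splits additively over the $P$- and $\eta$-channels. This already gives the decomposition~\eqref{eq:dapinorm}.

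Next I would compute the modal \hn norms from their transfer functions. Eliminating $\hat\Omega_i$ and $\hat\omega_i$ in the $i$-th subsystem yields $\hat\theta_i = \dfrac{qs+\gamma\lambda_i}{(\tau s^2+s+k\lambda_i)(qs+\gamma\lambda_i)+s}\,\hat P_i$ for the power-injection channel, and the same denominator with constant numerator $\varepsilon$ for the measurement-noise channel; in both cases $\hat y_i=\sqrt{\alpha\lambda_i}\,\hat\theta_i$. Applying the standard closed-form expression for the squared \hn norm of a scalar rational transfer function with third-order denominator, and simplifying the resulting ratio of polynomials in $\lambda_i$, the $P$-channel term collapses to $\frac{\alpha}{2k}\cdot\frac{1}{1+\varphi(\lambda_i,\gamma,k,q,\tau)^{-1}}$ and the $\eta$-channel term to $\varepsilon^2\frac{\alpha}{2k}\cdot\frac{1}{\gamma\lambda_i}\cdot\frac{1}{1+\varphi(\lambda_i,\gamma,k,q,\tau)}$, with $\varphi$ as defined in the statement. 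Summing over $i$ produces~\eqref{eq:dapiPnorm} and~\eqref{eq:H^eta}; the $P$-part coincides with the expression derived in~\cite{Tegling2016ACC}.

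For the CAPI system the $\theta,\omega$-coordinates decouple into modes exactly as above, but the scalar integral state $\Omega$ couples only to the average-frequency direction $\frac{1}{\sqrt n}\mathbf 1$ (the $\lambda_1=0$ mode), both in the measurement it integrates and in the feedback $u_i=\Omega$ it injects. Since $\eta$ enters only through the $\Omega$ equation, it can excite only the $i=1$ subsystem, whose $\hat\theta_1$-coordinate is again unobservable; hence $\|\mathcal{S}^{\CAPI}_\eta\|_2^2=0$. Each mode $i\ge 2$ reduces to the second-order droop system $\tau\ddot{\hat\theta}_i+\dot{\hat\theta}_i+k\lambda_i\hat\theta_i=\hat P_i$ with output $\sqrt{\alpha\lambda_i}\,\hat\theta_i$, whose squared \hn norm equals $\frac{\alpha}{2k}$ independently of $\lambda_i$ (standard second-order formula); summing over the $n-1$ nonzero modes gives $\frac{\alpha}{2k}(n-1)$.

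The conceptual steps---diagonalization, modal decoupling, and the unobservability of the $\lambda_1$ mode---are routine. I expect the main obstacle to be purely computational: carrying the third-order \hn-norm integral through for the DAPI modal subsystems and verifying that the ratio of polynomials in $\lambda_i$ indeed contracts to the compact form built from $\varphi$. Care is also needed to (i) first put the closed loops in standard state-space form, since the left-hand-side matrices $T,Q$ are not the identity, (ii) confirm Hurwitz stability of the modal subsystems for $\lambda_i>0$ so that the \hn norm is well defined, and (iii) handle the unobservable, marginally stable $\lambda_1$ mode, which is precisely what makes all the sums start at $i=2$.
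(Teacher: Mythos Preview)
Your proposal is correct and follows precisely the approach the paper invokes: it omits the derivation as ``analogous to the proof of~\cite[Theorem 3.2]{Tegling2016ACC},'' which is exactly the simultaneous-diagonalization and modal-decoupling computation you outline, together with the observation that uncorrelated inputs $P$ and $\eta$ make the squared $\mathcal{H}_2$ norm additive over the two channels. Your treatment of the $\lambda_1=0$ mode and the CAPI $\eta$-channel via unobservability of $\hat\theta_1$ is also the standard argument underlying the cited result, so there is nothing to add beyond the algebraic verification you already flag.
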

\begin{proof}
The proof is omitted since it is analogous to the proof of~\cite[Theorem 3.2]{Tegling2016ACC}.	The fact that the \hn norm can be partitioned into one part associated with the input $P$ and one part associated with the input $\eta$ is due to these inputs being uncorrelated. The total \hn norm is therefore the sum of the contributions of each of these inputs. 
	\end{proof}
\vspace{0.5mm}
\begin{rem}
	The expression~\eqref{eq:H^eta} is positive and represents the additional power losses arising due to the frequency measurement noise~$\eta$ in the DAPI controlled system. In the CAPI case, $\eta$ does not give rise to any additional power losses. This can be explained by the fact that the CAPI controller affects all generators equally. Any error caused by the measurement noise may affect the synchronous frequency, but will not induce additional power flows between generators.
\end{rem}
\vspace{0.5mm}

\noindent By noting that the function $\varphi_i := \varphi(\lambda_i,\gamma,k,q,\tau)$ is positive, it is readily verified that in the absence of frequency measurement noise~$\eta$, the losses associated with DAPI control are strictly smaller than those associated with CAPI control: \[||\mathcal{S}^{\DAPI}_P||_2^2 <||\mathcal{S}^{\CAPI}_P||_2^2.\] It can also be shown by analyzing $\varphi_i$ that this loss reduction a) benefits from weak distributed averaging, i.e., a small value for $\gamma$ 
and b) is greater for sparsely interconnected network topologies~\cite{Tegling2016ACC}.
We will now discuss how the additional term~$||\mathcal{S}^{\DAPI}_\eta||_2^2$ affects these conclusions.

\subsection{The role of distributed averaging}
The parameter $\gamma$ characterizes the interaction strengths $c_{ij}$ in the distributed averaging filter of the DAPI controller. It is a tunable parameter that determines how closely the integral states~$\Omega_i$ for $i \in \mathcal{V}$ are kept together. A large $\gamma$ increases the information flow through the network and speeds up the distributed averaging of the integral states, meaning that these states follow each other more closely.  In the limit where $\gamma \rightarrow \infty$, the integral states are identical and we have retrieved the CAPI algorithm. The following corollary to Proposition~\ref{prop:mainres} is easy to show:
\begin{corollary} 
\[ \lim_{\gamma \rightarrow \infty} ||S^\mathrm{DAPI}||_2^2 = ||S^\mathrm{CAPI}||_2^2.\]
It also holds that $\lim_{\gamma \rightarrow \infty} ||S^\mathrm{DAPI}_\eta||_2^2 = 0$, i.e., the losses associated with measurement noise tend to zero as $\gamma \rightarrow \infty$.
\end{corollary}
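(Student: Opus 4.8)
The strategy is to pass to the limit directly in the closed-form expressions of Theorem~\ref{prop:mainres}, handling the two summands $||\mathcal{S}^{\DAPI}_P||_2^2$ and $||\mathcal{S}^{\DAPI}_\eta||_2^2$ separately, and then invoking the fact that the CAPI norm equals $\frac{\alpha}{2k}(n-1)$ with no noise contribution.

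\textbf{Step 1: the noise term vanishes.} I would first show $\lim_{\gamma\to\infty}||\mathcal{S}^{\DAPI}_\eta||_2^2 = 0$. Inspecting~\eqref{eq:H^eta}, each term in the finite sum over $i=2,\dots,n$ carries a factor $\frac{1}{\gamma\lambda_i}$ with $\lambda_i>0$ (recall $\lambda_2,\dots,\lambda_n$ are the positive eigenvalues of the connected Laplacian $\mathcal{L}_B$), and the remaining factor $\frac{1}{1+\varphi(\lambda_i,\gamma,k,q,\tau)}$ lies in $(0,1)$. Hence each term is bounded above by $\frac{\varepsilon^2\alpha}{2k}\cdot\frac{1}{\gamma\lambda_i}\to 0$, and a finite sum of terms each going to zero goes to zero. (One could sharpen this: since $\varphi$ grows like $\tau\gamma\lambda_i$ for large $\gamma$, the $i$-th term actually decays like $1/\gamma^2$, but the crude bound already suffices.)

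\textbf{Step 2: the injection term converges to the CAPI value.} Next I would take the limit in~\eqref{eq:dapiPnorm}. For fixed $i\ge 2$, examine $\varphi(\lambda_i,\gamma,k,q,\tau) = \frac{kq^2\lambda_i + q\gamma\lambda_i + \tau(\gamma\lambda_i)^2}{q+\tau\gamma\lambda_i}$: as $\gamma\to\infty$, the numerator is dominated by $\tau\gamma^2\lambda_i^2$ and the denominator by $\tau\gamma\lambda_i$, so $\varphi\to\infty$ (linearly in $\gamma$). Therefore $\frac{1}{1+\varphi^{-1}}\to 1$ for each $i$, and the finite sum yields $||\mathcal{S}^{\DAPI}_P||_2^2 \to \frac{\alpha}{2k}(n-1)$, which is exactly $||\mathcal{S}^{\CAPI}_P||_2^2 = ||\mathcal{S}^{\CAPI}||_2^2$.

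\textbf{Step 3: combine.} Adding the two limits via~\eqref{eq:dapinorm} gives $\lim_{\gamma\to\infty}||\mathcal{S}^{\DAPI}||_2^2 = \frac{\alpha}{2k}(n-1) + 0 = ||\mathcal{S}^{\CAPI}||_2^2$, completing the argument. There is essentially no obstacle here: everything is a finite sum of elementary rational functions of $\gamma$, so limits pass through termwise with no uniformity concerns. The only point worth stating carefully is why $\varphi\to\infty$ for every positive eigenvalue $\lambda_i$ — i.e., that connectivity of $\mathcal{G}^P$ guarantees $\lambda_i>0$ for $i\ge 2$ — since if some $\lambda_i$ vanished the corresponding term in $\varphi$ would not blow up and the term in~\eqref{eq:H^eta} would be undefined; the indexing from $i=2$ and the standing assumption that $\mathcal{G}^P$ is connected take care of this.
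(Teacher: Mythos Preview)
Your proof is correct and follows exactly the approach the paper intends: the corollary is stated as ``easy to show'' from Proposition~\ref{prop:mainres}, and you carry out precisely that termwise limit computation in the closed-form sums~\eqref{eq:dapiPnorm} and~\eqref{eq:H^eta}. There is nothing to add.
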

\vspace{0.4mm}

Small values on $\gamma$ have the opposite effect. They allow for local frequency deviations to be handled more by the local controller, as information about local phase angle deviations will propagate slowly from node to node. In the absence of measurement noise~$\eta$, a relatively small value for $\gamma$ or, under certain conditions even $\gamma = 0$, has also been shown to optimize DAPI performance, as seen from Lemma~\ref{lem:gammastar}.

\begin{lemma}
	\label{lem:gammastar}
The value $\gamma^\star$ that minimizes $||\mathcal{S}^\mathrm{DAPI}_P||_2^2$ lies in the interval 
$ 0 \le \gamma^\star \le  \max_i \frac{q\sqrt{\tau k \lambda_i } -q}{\tau \lambda_i} .$
If the droop gain is such that $\lambda_i k \tau \le 1$ for all $i = 1,\ldots,n$, it holds $\gamma^\star =0$.
\end{lemma}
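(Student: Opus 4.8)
The plan is to work from the closed form~\eqref{eq:dapiPnorm}, which I would first rewrite as
\[
||\mathcal{S}^{\DAPI}_P||_2^2 = \frac{\alpha}{2k}\sum_{i=2}^{n}\frac{\varphi_i}{1+\varphi_i},\qquad \varphi_i := \varphi(\lambda_i,\gamma,k,q,\tau),
\]
and then note that $t\mapsto t/(1+t)$ is strictly increasing on $(0,\infty)$. Since $\varphi_i>0$, each summand is, as a function of $\gamma$, monotone in exactly the same way as $\varphi_i$ itself, so the whole question reduces to understanding the map $\gamma\mapsto\varphi_i(\gamma)$ for each fixed $i\ge 2$.

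Next I would differentiate $\varphi_i$. Substituting $x:=\gamma\lambda_i\ge 0$ (legitimate because $\lambda_i>0$ for $i\ge 2$) gives $\varphi_i=(\tau x^2+qx+kq^2\lambda_i)/(\tau x+q)$, and a short quotient-rule computation collapses the numerator of $\partial\varphi_i/\partial x$ to a perfect square,
\[
\frac{\partial\varphi_i}{\partial x}=\frac{(\tau x+q)^2-\tau kq^2\lambda_i}{(\tau x+q)^2}.
\]
Hence $\varphi_i$ has a single stationary point, at $\tau x+q=q\sqrt{\tau k\lambda_i}$, i.e.\ at $\gamma=\beta_i:=\dfrac{q\sqrt{\tau k\lambda_i}-q}{\tau\lambda_i}$; it is strictly decreasing for $\gamma<\beta_i$ and strictly increasing for $\gamma>\beta_i$. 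Note that $\beta_i\le 0$ precisely when $\tau k\lambda_i\le 1$, in which case $\varphi_i$ — and therefore the $i$th summand — is nondecreasing on $[0,\infty)$ and strictly increasing on $(0,\infty)$.

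To conclude, set $B:=\max_{i\ge 2}\beta_i$, which is exactly the upper bound stated in the lemma. For any $\gamma>B$ one has $\gamma>\beta_i$ for every $i$, so every summand is strictly increasing at $\gamma$, whence $\tfrac{\partial}{\partial\gamma}||\mathcal{S}^{\DAPI}_P||_2^2>0$ there. The objective is continuous on $[0,\infty)$ and strictly increasing on $[\max(B,0),\infty)$, so its minimum over $\gamma\ge 0$ is attained and lies in the compact set $[0,\max(B,0)]$; this gives $0\le\gamma^\star\le B$ whenever $B\ge 0$. For the second claim, if $\tau k\lambda_i\le 1$ for all $i$ then $\beta_i\le 0$ for all $i$, so each summand is strictly increasing on $(0,\infty)$, hence $||\mathcal{S}^{\DAPI}_P(\gamma)||_2^2>||\mathcal{S}^{\DAPI}_P(0)||_2^2$ for every $\gamma>0$, i.e.\ $\gamma^\star=0$.

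The delicate point — and the reason the first part is a localization rather than an identification of $\gamma^\star$ — is that a sum of unimodal functions with distinct individual minimizers need not itself be unimodal, so the minimizer cannot be read off termwise; the argument must instead rely solely on the one-sided observation that beyond $B$ every term is increasing. Everything else (the quotient-rule differentiation, recognizing the square $(\tau x+q)^2-\tau kq^2\lambda_i$, and the sign bookkeeping around $\beta_i$) is routine algebra.
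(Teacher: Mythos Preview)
Your argument is correct. The key steps --- rewriting each summand as $\varphi_i/(1+\varphi_i)$, reducing to the monotonicity of $\gamma\mapsto\varphi_i(\gamma)$, the derivative computation yielding $\partial\varphi_i/\partial x=\big((\tau x+q)^2-\tau kq^2\lambda_i\big)/(\tau x+q)^2$, and the localization via $B=\max_{i\ge 2}\beta_i$ --- are all sound, and your handling of the boundary case $\tau k\lambda_i=1$ (nondecreasing on $[0,\infty)$, strictly increasing on $(0,\infty)$) is careful enough to force $\gamma^\star=0$.

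The paper does not actually prove this lemma: its proof consists of the single line ``Follows~[Theorem~3]{Andreasson2017CDC}'', deferring entirely to an external reference. Your proposal therefore supplies a self-contained argument where the paper gives none. The approach you take --- termwise unimodality of $\varphi_i$ plus the observation that beyond the largest individual minimizer every term is increasing --- is the natural calculus route and is almost certainly what the cited reference does as well. Your closing remark that the sum of unimodal functions need not be unimodal, explaining why the lemma localizes rather than identifies $\gamma^\star$, is a useful clarification that the paper does not make explicit (indeed, elsewhere the paper invokes uniqueness of $\gamma^\star$ by citing yet another reference).
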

\begin{proof}
Follows~\cite[Theorem 3]{Andreasson2017CDC}.
\end{proof}
\vspace{0.4mm}

However, when measurement noise $\eta$ is added to the model, it is immediately obvious that $\gamma = 0$ is never an optimal, or even feasible choice (as predicted also by the stability analyses in~\cite{Andreasson2014ACC, SimpsonPorco2013}). 
The following corollary to Proposition~\ref{prop:mainres} follows directly from~\eqref{eq:H^eta}.
\begin{corollary} 
\label{cor:S-DAPI-gamma-0}	It holds that \[ \lim_{\gamma \rightarrow 0} ||S^\mathrm{DAPI}_\eta||_2^2 = \infty\]%
and therefore $\lim_{\gamma \rightarrow 0} ||S^\mathrm{DAPI}||_2^2 = \infty$.
\end{corollary}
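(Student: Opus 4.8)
The plan is to reason directly from the closed-form expression~\eqref{eq:H^eta} for $||\mathcal{S}^\DAPI_\eta||_2^2$, which is available from Proposition~\ref{prop:mainres}. I would first isolate the $\gamma$-dependence of the summand. For each fixed $i \in \{2,\ldots,n\}$, write the $i$th term as
\[
\frac{\alpha \varepsilon^2}{2k}\cdot\frac{1}{\gamma\lambda_i}\cdot\frac{1}{1+\varphi_i(\gamma)},
\]
where $\varphi_i(\gamma) := \varphi(\lambda_i,\gamma,k,q,\tau)$. The first step is to observe that $\lambda_i > 0$ for $i \ge 2$ (since $\mathcal{G}^P$ is connected, only $\lambda_1 = 0$), so $\frac{1}{\gamma\lambda_i} \to \infty$ as $\gamma \to 0^+$.

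The second step is to check that the remaining factor $\frac{1}{1+\varphi_i(\gamma)}$ stays bounded away from zero as $\gamma \to 0^+$. Inspecting the definition
\[
\varphi_i(\gamma) = \frac{kq^2\lambda_i + q\gamma\lambda_i + \tau(\gamma\lambda_i)^2}{q + \tau\gamma\lambda_i},
\]
both numerator and denominator are continuous in $\gamma$ and the denominator does not vanish at $\gamma = 0$ (it equals $q > 0$ there), so $\varphi_i(\gamma) \to \varphi_i(0) = kq\lambda_i$, a finite nonnegative number. Hence $\frac{1}{1+\varphi_i(\gamma)} \to \frac{1}{1+kq\lambda_i} > 0$. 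Therefore the $i$th term of the sum diverges to $+\infty$ as $\gamma \to 0^+$, and since every term of the (finite) sum is positive, the whole sum~\eqref{eq:H^eta} diverges: $\lim_{\gamma\to 0}||\mathcal{S}^\DAPI_\eta||_2^2 = \infty$. Finally, combining this with the decomposition~\eqref{eq:dapinorm}, $||\mathcal{S}^\DAPI||_2^2 = ||\mathcal{S}^\DAPI_P||_2^2 + ||\mathcal{S}^\DAPI_\eta||_2^2 \ge ||\mathcal{S}^\DAPI_\eta||_2^2 \to \infty$, which gives the second claim. (One could also note separately that $||\mathcal{S}^\DAPI_P||_2^2$ stays bounded as $\gamma\to 0$, but this is not needed.)

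There is really no hard part here: the result is an immediate limit computation once the closed-form formula is in hand. The only point requiring any care is confirming that the $\frac{1}{1+\varphi_i}$ factor does not decay fast enough to cancel the blow-up of $\frac{1}{\gamma\lambda_i}$ — but since $\varphi_i(\gamma)$ has a finite limit as $\gamma \to 0$, this is immediate. The interpretive content (this being why $\gamma = 0$ is never feasible, consistent with the stability analyses cited) is commentary rather than part of the proof.
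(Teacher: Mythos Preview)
Your proof is correct and matches the paper's approach: the paper simply states that the corollary ``follows directly from~\eqref{eq:H^eta}'' without further argument, and your explicit limit computation (checking that $\tfrac{1}{\gamma\lambda_i}\to\infty$ while $\tfrac{1}{1+\varphi_i(\gamma)}$ remains bounded away from zero) is exactly the verification the paper leaves implicit.
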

\vspace{1mm}
It turns out that an analytic expression for the optimal choice of $\gamma$ in the presence of measurement noise is difficult to obtain even for special graph structures, and a numerical evaluation for each case is necessary. We can, however, give the following proposition.
\vspace{0.3mm}
\begin{proposition}
\label{prop:gammas}
	Let $\gamma^{\star,\eta}$ be optimal with respect to the power losses~\eqref{eq:dapinorm} in the presence of measurement noise $\eta$, and let $\gamma^\star$ be optimal in its absence (i.e. let $\gamma^\star$ minimize $||S^\mathrm{DAPI}_P||_2^2$). Then,
	\vspace{-0.2mm} \[
	\gamma^{\star,\eta} > \gamma^\star.\] 
\end{proposition}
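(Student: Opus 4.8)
The plan is to treat the total loss $\|\mathcal{S}^{\DAPI}\|_2^2$ as a function of $\gamma$ of the form $f(\gamma) = f_P(\gamma) + f_\eta(\gamma)$, where $f_P(\gamma) := \|\mathcal{S}^{\DAPI}_P\|_2^2$ and $f_\eta(\gamma) := \|\mathcal{S}^{\DAPI}_\eta\|_2^2$ as given by~\eqref{eq:dapiPnorm} and~\eqref{eq:H^eta}, and to argue that adding the strictly decreasing term $f_\eta$ pushes the minimizer of $f$ strictly to the right of the minimizer of $f_P$. The first step is to establish that $f_\eta$ is strictly decreasing in $\gamma$ on $(0,\infty)$: each summand is $\varepsilon^2\frac{\alpha}{2k}\cdot\frac{1}{\gamma\lambda_i}\cdot\frac{1}{1+\varphi_i}$, and since $\varphi_i=\varphi(\lambda_i,\gamma,k,q,\tau)$ is itself increasing in $\gamma$ (inspect $\varphi$: numerator grows quadratically in $\gamma\lambda_i$ while the denominator grows only linearly), both factors $\frac{1}{\gamma\lambda_i}$ and $\frac{1}{1+\varphi_i}$ are strictly decreasing, hence so is $f_\eta$, and strictly so. In particular $f_\eta'(\gamma)<0$ for every $\gamma>0$.

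The second step handles the case $\gamma^\star=0$, which by Lemma~\ref{lem:gammastar} occurs whenever $\lambda_i k\tau\le 1$ for all $i$. Here the claim is essentially immediate: by Corollary~\ref{cor:S-DAPI-gamma-0} we have $f(\gamma)\to\infty$ as $\gamma\to 0^+$, so $\gamma^{\star,\eta}$ cannot be $0$; since $\gamma^{\star,\eta}\ge 0$ and $\gamma^\star=0$, we get $\gamma^{\star,\eta}>\gamma^\star$. The third step is the interior case $\gamma^\star>0$, where first-order optimality of $f_P$ gives $f_P'(\gamma^\star)=0$ (this needs $f_P$ to be differentiable at $\gamma^\star$, which it is, being a finite sum of smooth rational functions on $(0,\infty)$). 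Then $f'(\gamma^\star) = f_P'(\gamma^\star) + f_\eta'(\gamma^\star) = f_\eta'(\gamma^\star) < 0$ by Step~1. Since any minimizer $\gamma^{\star,\eta}$ of $f$ satisfies $f'(\gamma^{\star,\eta})=0$ (or lies on the boundary $\gamma=0$, already excluded), and $f$ is decreasing at $\gamma^\star$, the minimizer must lie strictly to the right: $\gamma^{\star,\eta}>\gamma^\star$. To make this rigorous without assuming uniqueness or unimodality, I would argue that $f(\gamma^\star+\delta)<f(\gamma^\star)$ for small $\delta>0$ (from $f'(\gamma^\star)<0$), so the infimum of $f$ over $(0,\infty)$ is not attained at or below $\gamma^\star$; combined with coercivity of $f_\eta$ near $0$ and the fact that $f_P$ is bounded, the infimum is attained at some finite $\gamma^{\star,\eta}>\gamma^\star$.

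The main obstacle is the interior case: one must be careful about what "optimal" means when $f_P$ (and hence $f$) is not a priori known to be convex or unimodal in $\gamma$. The cleanest route is not to invoke uniqueness but to work directly with the derivative inequality $f'(\gamma^\star)=f_\eta'(\gamma^\star)<0$, which shows $\gamma^\star$ is not a local minimum of $f$ and that $f$ strictly decreases just to the right of it; this alone forces every global minimizer of $f$ to exceed $\gamma^\star$, provided we also confirm a global minimizer exists (coercivity at $0$ via Corollary~\ref{cor:S-DAPI-gamma-0}, and boundedness as $\gamma\to\infty$ via the Corollary stating $f\to\|\mathcal{S}^{\CAPI}\|_2^2$). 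A secondary point to check is differentiability of $f_P$ at $\gamma^\star$ when the bound in Lemma~\ref{lem:gammastar} is attained with equality for some $i$; since the summands are smooth rational functions with strictly positive denominators for all $\gamma\ge 0$, this causes no difficulty.
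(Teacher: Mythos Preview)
Your overall strategy---decompose $f=f_P+f_\eta$, show $f_\eta$ is strictly decreasing, and use $f'(\gamma^\star)=f_\eta'(\gamma^\star)<0$---is sound and in fact cleaner than the paper's argument, which instead uses a pairing of points $\gamma^-<\gamma^\star$ and $\gamma^+>\gamma^\star$ (via the identity $\underline{\chi}_i+\bar{\chi}_i=1$ and uniqueness of $\gamma^\star$) before invoking the same derivative observation at $\gamma^\star$. However, there are two genuine issues in your execution.

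First, your justification for Step~1 is wrong: $\varphi_i$ is \emph{not} increasing in $\gamma$ in general. Writing $x=\gamma\lambda_i$, one has $\varphi_i=x+\dfrac{kq^2\lambda_i}{q+\tau x}$, whose derivative in $x$ is $1-\dfrac{kq^2\lambda_i\tau}{(q+\tau x)^2}$, which is negative for small $x$ whenever $k\lambda_i\tau>1$ (exactly the regime where Lemma~\ref{lem:gammastar} gives $\gamma^\star>0$). Your conclusion is nevertheless correct: what you should verify instead is that $\gamma\lambda_i(1+\varphi_i)=x+x^2+\dfrac{kq^2\lambda_i\,x}{q+\tau x}$ is strictly increasing in $x$ (each of the three terms is), so each summand $\dfrac{1}{\gamma\lambda_i(1+\varphi_i)}$ and hence $f_\eta$ is strictly decreasing.

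Second, the inference ``$f(\gamma^\star+\delta)<f(\gamma^\star)$, so the infimum of $f$ is not attained at or below $\gamma^\star$'' does not follow from $f'(\gamma^\star)<0$ alone: this rules out $\gamma^\star$ but not points in $(0,\gamma^\star)$. The missing (and easy) step is that for any $\gamma<\gamma^\star$ one has $f_P(\gamma)\ge f_P(\gamma^\star)$ (global optimality of $\gamma^\star$ for $f_P$) and $f_\eta(\gamma)>f_\eta(\gamma^\star)$ (strict monotonicity from Step~1), hence $f(\gamma)>f(\gamma^\star)$. Combined with $f(\gamma^\star+\delta)<f(\gamma^\star)$ and the existence of a minimizer (coercivity at $0$, boundedness at $\infty$), this forces $\gamma^{\star,\eta}>\gamma^\star$. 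With these two repairs your argument is complete and avoids the paper's more elaborate pairing construction.
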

	\vspace{0.3mm}
\begin{proof}
The case of $\gamma^\star=0$ is trivial due to \ref{cor:S-DAPI-gamma-0}. Assume now $\gamma^\star>0$. Define $\bar{\chi}_i(\gamma)=\frac{1}{1+\varphi_i(\gamma)}$ and $\underline{\chi}_i(\gamma)=\frac{1}{1+\inv{\varphi_i}(\gamma)}$, and note that $\underline{\chi}_i+\bar{\chi}_i=1$. Therefore $\bar{\chi}_i$ (that occurs in the expression for $||\mathcal{S}^\mathrm{DAPI}_\eta ||_2^2$) is increasing whenever $\underline{\chi}_i$ (that occurs in $||\mathcal{S}^\mathrm{DAPI}_P ||_2^2$) is decreasing and vice versa. Moreover, since $\gamma^\star$ is a unique minimizer for $||\mathcal{S}^\mathrm{DAPI}_P||_2^2$ \cite{Tegling2016ACC}, $\sum_{i=2}^n\bar{\chi}_i$ in~$||\mathcal{S}^\mathrm{DAPI}_\eta ||_2^2$ will have a unique maximum at $\gamma^\star$. We also know that $0<\sum_{i=2}^n\bar{\chi}_i(\gamma)<\sum_{i=2}^n\bar{\chi}_i(\gamma^\star)$ for any $\gamma$ in the open interval $(0,\gamma^\star)$ and $0=\lim_{\gamma \rightarrow \infty}\sum_{i=2}^n\bar{\chi}_i(\gamma^\star)<\sum_{i=2}^n\bar{\chi}_i(\gamma)\leq\sum_{i=2}^n\bar{\chi}_i(\gamma^\star)$. That means that the set of all possible values that $\sum_{i=2}^n\bar{\chi}_i(\gamma)$ can assume in the interval $(0,\gamma^\star)$ are also contained in the set of values that it can assume in $(\gamma^\star,\infty)$. The same argument can be applied to $\sum_{i=2}^n\underline{\chi}_i(\gamma)$. This means that for any $\gamma^-$ in the open interval $(0,\gamma^\star)$, there exists a $\gamma^+>\gamma^\star$ such that $\sum_{i=2}^n\bar{\chi}_i(\gamma^-)=\sum_{i=2}^n\bar{\chi}_i(\gamma^+)$ while also $\sum_{i=2}^n\underline{\chi}_i(\gamma^-)=\sum_{i=2}^n\underline{\chi}_i(\gamma^+)$, since $\underline{\chi}_i+\bar{\chi}_i=1$.

From $\frac{\varepsilon^2}{\gamma^-\lambda_i}>\frac{\varepsilon^2}{\gamma^+\lambda_i}$ follows that $\sum_{i=2}^n\frac{\varepsilon^2}{\gamma^-\lambda_i}\bar{\chi}_i(\gamma^-)>\sum_{i=2}^n\frac{\varepsilon^2}{\gamma^+\lambda_i}\bar{\chi}_i(\gamma^+)$ while $\sum_{i=2}^n\underline{\chi}_i(\gamma^-)=\sum_{i=2}^n\underline{\chi}_i(\gamma^+)$ still holds. This means that
$||\mathcal{S}^\mathrm{DAPI}||_2^2\big|_{\gamma = \gamma^-}>||\mathcal{S}^\mathrm{DAPI}||_2^2\big|_{\gamma = \gamma^+}$. Therefore, if we assume $\gamma^-$ locally minimizes $||\mathcal{S}^\mathrm{DAPI}||_2^2$ in the interval $(0,\gamma^\star)$, then $\gamma^+$ would give an even smaller value for $||\mathcal{S}^\mathrm{DAPI}||_2^2$. We conclude that $\gamma^{\star,\eta}\geq\gamma^\star$.

It remains to show that $\gamma^\star$ is not optimal in the presence of $\eta$. Since $\frac{d}{d\gamma}||\mathcal{S}^\mathrm{DAPI}||_2^2\big|_{\gamma = \gamma^\star} = -\frac{a\varepsilon^2}{2k(\gamma^\star)^2}\sum_{i=2}^n \frac{1}{\lambda_i}\frac{1}{1+\varphi_i(\gamma^\star)}<0$, it follows that $\gamma^\star$ is not optimal. Hence, $\gamma^{\star,\eta}>\gamma^\star$.
\end{proof}
\vspace{1mm}
In Fig. \ref{fig:optimal_gamma} we give a numerical example that illustrates the results of this section. 
\begin{figure}
	\centering
	\begin{tikzpicture}
		\node[inner sep=0pt] (img) at (0,0)
		{\includegraphics[width=0.5\textwidth,trim=3cm 11.5cm 3cm 11.5cm,clip]{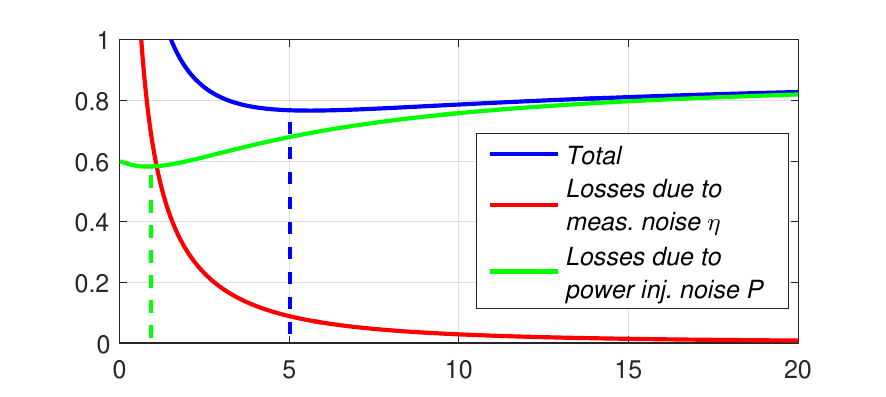}};
		\draw (0, -2.2) node {Interaction strength $\gamma$};
		\node[xshift=-4.5cm,rotate=90,anchor=north]{Expected power losses};
		\draw (-2.7cm, -1cm) node {$\gamma^\star$};
		\draw (-1.2cm, -0.3cm) node {$\gamma^{\star,\eta}$};
	\end{tikzpicture}
	\caption{Power losses' dependence on interaction strength~$\gamma$ in DAPI. 
	Here, we have modeled a complete graph with $n=10$ nodes and set $k=5,~q=\tau=0.8,~\alpha=\varepsilon=1$ and $b_{ij}=0.05$ for all $(i,j)\in \mathcal{E}^P$. One can see that the optimal~$\gamma$ is shifted towards a larger value in presence of noise $\eta$.}
	\label{fig:optimal_gamma}
\end{figure}
Overall, our results imply that the distributed averaging of integral states that takes place in the DAPI controller is increasingly important in the presence of measurement noise. Its relative importance, of course, depends on the noise intensity~$\varepsilon$. In Section~\ref{sec:existence} we will also discuss how this is impacted by the size of the network.

\subsection{The role of network density}
\emph{Network density }captures the notion of the connectivity of a network, and can formally be defined as the proportion of a graph's actual edges to its potential edges, i.e. $|\mathcal{E}^P|/|\mathcal{V}\times\mathcal{V}|$~\cite{Diestelbook}. If this number is small, we call the graph \textit{sparse} and if it is big, we call it dense, or well-interconnected.  

While this property does not effect the expected power losses under CAPI control, it plays an important role for the DAPI control law. In \cite{Tegling2016ACC} it was shown that the expected power losses associated with load disturbances~$P$ increase with the network density. This implies that, in the absence of~$\eta$, the largest relative performance improvement of DAPI control over CAPI control is achieved for sparse network topologies. This is no longer the case in the presence of measurement noise $\eta$, as the losses due to such noise are highest for sparse topologies and decrease with increased network density. 
 Consider the following proposition.
 \vspace{0.3mm}
\begin{proposition}
\label{prop:density}
	Adding an edge to the network $\mathcal{G}^P$, or increasing its susceptance (decreasing the reactance) can only decrease the expected power losses associated with the measurement noise~$||\mathcal{S}^\mathrm{DAPI}_\eta||_2^2$ in~\eqref{eq:H^eta}, and vice versa. 
\end{proposition}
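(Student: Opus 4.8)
The plan is to show monotonicity of $||\mathcal{S}^{\DAPI}_\eta||_2^2$ edge-by-edge and weight-by-weight by working directly with the closed-form expression~\eqref{eq:H^eta} and exploiting the fact that adding an edge (or increasing a susceptance $b_{ij}$) can only increase every eigenvalue $\lambda_i$ of the susceptance Laplacian $\mathcal{L}_B$. The first step is to recall the standard interlacing/monotonicity fact for weighted graph Laplacians: if $\mathcal{L}_B' = \mathcal{L}_B + \delta\, \mathcal{L}_{(i,j)}$ where $\mathcal{L}_{(i,j)}$ is the elementary Laplacian of a single edge and $\delta > 0$ (this covers both adding a new edge and increasing the weight of an existing one), then $\mathcal{L}_B' \succeq \mathcal{L}_B$, and hence $\lambda_k' \ge \lambda_k$ for all $k$ by the Courant–Fischer min-max characterization. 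So it suffices to show that each summand in~\eqref{eq:H^eta}, viewed as a function of $\lambda$ (with $\gamma, k, q, \tau$ fixed), is nonincreasing in $\lambda$ on $(0,\infty)$.

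The second step is exactly that scalar monotonicity claim. Writing the $i$-th summand (up to the positive constant $\varepsilon^2 \alpha/(2k)$) as
\[
f(\lambda) \;=\; \frac{1}{\gamma\lambda}\cdot\frac{1}{1+\varphi(\lambda)}, \qquad
\varphi(\lambda) \;=\; \frac{kq^2\lambda + q\gamma\lambda + \tau(\gamma\lambda)^2}{q + \tau\gamma\lambda},
\]
I would argue that $f$ is a product of two positive, nonincreasing factors: the factor $1/(\gamma\lambda)$ is obviously strictly decreasing in $\lambda$, so it remains to check that $\varphi(\lambda)$ is nondecreasing in $\lambda$ (equivalently that $1/(1+\varphi(\lambda))$ is nonincreasing). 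For this I would clear denominators and differentiate: $\varphi(\lambda) = \frac{N(\lambda)}{D(\lambda)}$ with $N(\lambda) = (kq^2 + q\gamma)\lambda + \tau\gamma^2\lambda^2$ and $D(\lambda) = q + \tau\gamma\lambda$, both with nonnegative coefficients and $N(0)=0$, $D(0)=q>0$. A direct computation of $N'D - ND'$ yields a polynomial in $\lambda$ with manifestly nonnegative coefficients (the quadratic-in-$\lambda$ term from $N$ interacting with the linear $D$ produces a $\tau^2\gamma^3\lambda^2$ contribution, etc.), so $\varphi' \ge 0$ on $[0,\infty)$. Hence $f$ is nonincreasing, in fact strictly decreasing wherever $\lambda>0$, because of the $1/(\gamma\lambda)$ factor alone.

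Combining the two steps: increasing a susceptance or adding an edge replaces each $\lambda_i$ by some $\lambda_i' \ge \lambda_i$, and since $||\mathcal{S}^{\DAPI}_\eta||_2^2 = \varepsilon^2 \frac{\alpha}{2k}\sum_{i=2}^n f(\lambda_i)$ with $f$ nonincreasing, the sum can only decrease; the reverse modification gives the reverse inequality, which is the ``and vice versa'' part. One edge case to handle: adding an edge to a connected graph keeps $\lambda_1 = 0$ and the sum still runs from $i=2$, and since the eigenvalue perturbation is a rank-one positive semidefinite bump, no $\lambda_i$ with $i\ge 2$ can decrease, so the index range causes no trouble.

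The main obstacle I anticipate is purely bookkeeping rather than conceptual: making the sign of $N'D - ND'$ transparent without an error-prone expansion. If the brute-force differentiation looks messy, a cleaner route is to rewrite $\varphi(\lambda) = kq^2\lambda/D(\lambda) + \gamma\lambda\bigl(q + \tau\gamma\lambda\bigr)/D(\lambda) = kq^2\lambda/(q+\tau\gamma\lambda) + \gamma\lambda$: the second term $\gamma\lambda$ is trivially increasing, and the first term $kq^2\lambda/(q+\tau\gamma\lambda)$ is increasing because $\lambda/(a+b\lambda)$ is increasing in $\lambda$ for $a,b>0$ (its derivative is $a/(a+b\lambda)^2 > 0$). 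This decomposition sidesteps the messy algebra entirely and is the version I would actually write up.
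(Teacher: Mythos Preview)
Your proposal is correct and follows essentially the same strategy as the paper: first show that each summand of~\eqref{eq:H^eta} is decreasing in $\lambda$ via monotonicity of $\varphi$, then invoke eigenvalue monotonicity of the Laplacian under a positive semidefinite rank-one perturbation. Your decomposition $\varphi(\lambda) = \frac{kq^2\lambda}{q+\tau\gamma\lambda} + \gamma\lambda$ is a tidier way to see $\varphi'\ge 0$ than the paper's direct quotient-rule computation, and your unified treatment of ``add an edge'' and ``increase a weight'' via $\mathcal{L}_B' = \mathcal{L}_B + \delta\,\mathcal{L}_{(i,j)} \succeq \mathcal{L}_B$ with Courant--Fischer is equivalent to the paper's separate citations of interlacing and the Courant--Weyl inequalities.
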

	\vspace{0.3mm}
\begin{proof}
	Let $\mathcal{G}^P=(\mathcal{V},\mathcal{E}^P)$ be the original graph with weights $b_{ij}$ for $(i,j)\in \mathcal{E}^P$. Since $\frac{d\varphi_i}{d\lambda_i}= \frac{2\lambda_i \gamma^2 \tau q+\gamma q^2 + k q^3}{(q+\tau q \lambda_i)^2}>0$, we derive that $\frac{d}{d\lambda_i} ||\mathcal{S}^\DAPI_\eta||_2^2<0$ for all $i = 2,\ldots,n$. Thus, it is sufficient to show that at least one eigenvalue $\lambda_i$ increases while the others do not decrease.
	
	(Part 1: Additional edge) Let $e$ be an additional edge that is added to $\mathcal{G}^P=(\mathcal{V},\mathcal{E}^P)$, constituting the extended graph $\tilde{\mathcal{G}}=(\mathcal{V},\mathcal{E}^P\cup e)$. The eigenvalues satisfy $0=\lambda_1(\mathcal{G}^P)=\lambda_1(\tilde{\mathcal{G}})\leq\dots\leq\lambda_i(\mathcal{G}^P)\leq\lambda_i(\tilde{\mathcal{G}})\leq\dots\leq\lambda_n(\mathcal{G}^P)\leq\lambda_n(\tilde{\mathcal{G}})$ where at least one inequality is strict~\cite[Theorem 3.2]{Mohar91thelaplacian}. The same argument can be used in case an edge is removed.
	
	(Part 2: Increased susceptance) Assume a positive change $\Delta b>0$ for an arbitrary edge $e' =(i,j) \in \mathcal{E}$. Define the graph $\mathcal{G}'=(\mathcal{V},\{e'\})$ where $e'$ has the weight $\Delta b$. The Laplacian of $\mathcal{G}'$ is $\mathcal{L}'$ and that of the original graph $\mathcal{G}^P$ is~$\mathcal{L}_B$. According to the Courant-Weyl inequalities~\cite[Section 2.8]{brouwer12}, it holds $\lambda_i(\mathcal{L}_B+\mathcal{L}')\geq\lambda_i(\mathcal{L}_B)$. Again, at least one inequality must be strict, following
	~\cite[Theorem 2.6.1]{brouwer12}. Reducing a susceptance follows the same argument. 
\end{proof}
\vspace{0.3mm}
\begin{rem}
Proposition~\ref{prop:density} does not require conductances~$g_{ij}$ to be constant, but also holds if the ratio $\alpha=\frac{g_{ij}}{b_{ij}}=\textrm{const.}$ or increasing. 
\end{rem}
\vspace{0.8mm}

This result implies that the \emph{total} losses' dependence	 on the network density is not straightforward in the presence of measurement noise, since the two loss components have opposite dependencies. The best network topology for loss reduction thus depends on remaining system parameters and, in particular, the relative noise intensity characterized by~$\varepsilon$.




\section{Limitations to the Scalability of DAPI~Control}
\label{sec:scalability}
In the previous section, we discussed how the performance in terms of resistive power losses depends on the system parameters for a given network. Aside from the importance of designing the controller so that the losses for a given network are acceptable, an important issue is the \emph{scalability} of a given control design. That is, the ability to extend the network without degrading performance and without needing to alter the network or control design. As we shall see, measurement noise affects the scalability of DAPI control.


\subsection{Scaling of power losses with network size }
To discuss the scalability of the DAPI controller, we need to assume that all parameters and gains of the controller~ \eqref{eq:DAPI-control-law} are fixed. That is, they do not change as the network size grows. Further, we will consider the transient power losses from~\eqref{eq:h2norn} \emph{normalized} by the total number of nodes~$n$. These \textit{per-node} losses should remain bounded in order for the controller to be regarded as scalable. 

The expected power losses associated with measurement noise are given in \eqref{eq:H^eta}. Notice the factor $\frac{1}{\lambda_i}$, which will tend to infinity for small eigenvalues. This causes an unfavorable scaling of the losses in sparse networks. In regular lattice networks, it is possible to derive exact expressions for the asymptotic (in network size) scalings of the losses. More precisely, we will consider 1- and 2-dimensional lattice networks and their $q$-fuzzes, where a 1D lattice corresponds to a path graph. The $q$-fuzz of a lattice graph is created by adding edges between all nodes that are at a graph distance of $q$ or less from each other. We note that scalings for lattices also apply to any graph that can be embedded in such lattices, but defer a detailed discussion to an extended study. 

Consider the following proposition: 
\vspace{0.3mm}
\begin{proposition}[Performance scaling in lattices] 
\label{prop:scaling}
Let the graph~$\mathcal{G}^P$ be a lattice or its $q$-fuzz in one or two dimensions ($d = 1$ or $d = 2$), and let the line susceptances~$b_{ij}$ be bounded. Then, the \emph{per-node} losses associated with measurement noise scale according to:
\[ \frac{1}{n}|| \mathcal{S}^{\mathrm{DAPI}}_\eta||_2^2 \sim  \begin{cases} \varepsilon^2 n  & \text{if} ~~d = 1 \\[-1\jot] \varepsilon^2 \log n & \text{if} ~~ d = 2, \end{cases} \]
where the notation $u(n) \sim v(n)$ implies that $\underline{c}v(n) \le u(n) \le \bar{c}v(n)$ for sufficiently large $n$, where $\underline{c},~\bar{c}$ are positive constants. The per-node losses associated with power injection noise are, on the other hand, upper bounded as: 
\vspace{-1.3mm}
\[ \frac{1}{n}|| \mathcal{S}^{\mathrm{DAPI}}_P||_2^2 \le \frac{\alpha}{2k}.\]
\end{proposition}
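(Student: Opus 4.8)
The plan is to reduce both claims to elementary estimates on the summands of~\eqref{eq:H^eta} and~\eqref{eq:dapiPnorm}, and then to feed in the known spectral density of lattice Laplacians for the measurement-noise term. The power-injection bound is immediate: since $\varphi_i>0$, each term of~\eqref{eq:dapiPnorm} satisfies $\frac{1}{1+\varphi_i^{-1}}=\frac{\varphi_i}{1+\varphi_i}<1$, so $||\mathcal{S}^{\DAPI}_P||_2^2<\frac{\alpha}{2k}(n-1)<\frac{\alpha}{2k}n$, and dividing by $n$ gives the stated per-node bound.

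For the noise term, the first step is to sandwich $\varphi_i$ between multiples of $\lambda_i$. Writing $\varphi_i=\lambda_i f(\lambda_i)$ with $f(\lambda)=\frac{kq^2+q\gamma+\tau\gamma^2\lambda}{q+\tau\gamma\lambda}$, a one-line derivative computation gives $f'(\lambda)=-kq^2\tau\gamma/(q+\tau\gamma\lambda)^2<0$, so $f$ decreases from $f(0)=kq+\gamma$ to $\gamma$, whence $\gamma\lambda_i<\varphi_i\le(kq+\gamma)\lambda_i$. Consequently, with constants depending only on the fixed parameters $\gamma,k,q,\tau$, the $i$-th summand of~\eqref{eq:H^eta} is comparable to $\frac{1}{\lambda_i(1+\lambda_i)}$, and therefore $\frac{1}{n}||\mathcal{S}^{\DAPI}_\eta||_2^2\asymp\frac{\varepsilon^2}{n}\sum_{i=2}^n\frac{1}{\lambda_i(1+\lambda_i)}$. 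It then remains only to evaluate this sum up to constants.

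Next I would bring in the spectrum of $\mathcal{L}_B$. Since the susceptances are bounded above and below by positive constants, $\mathcal{L}_B$ lies, in the positive-semidefinite order, between constant multiples of the unweighted Laplacian of the same $d$-dimensional lattice or $q$-fuzz, so by Courant--Fischer $\lambda_i(\mathcal{L}_B)$ is comparable to the $i$-th unweighted eigenvalue. The latter is diagonalized explicitly by the discrete Fourier basis, and the number of its eigenvalues below a level $t$ is of order $n\,t^{d/2}$ in the relevant range; equivalently there are constants $0<c_3<c_4$ with $\lambda_i\ge c_3(i/n)^{2/d}$ for all $i\ge2$ and $\lambda_i\le c_4(i/n)^{2/d}$ for $2\le i\le cn$. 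Splitting $\sum_{i=2}^n\frac{1}{\lambda_i(1+\lambda_i)}=\sum_{\lambda_i\le1}+\sum_{\lambda_i>1}$, the tail is at most $n$, while for the head one uses $\lambda_i\ge c_3(i/n)^{2/d}$ to get a bound of order $n\log n$ when $d=2$ and order $n^2$ when $d=1$; dividing by $n$ yields the claimed upper bounds. For the matching lower bounds, when $d=1$ I keep only the $i=2$ term, using $\lambda_2\le c_4(2/n)^2$ so that $1+\lambda_2\asymp1$, which gives order $n$; when $d=2$ I keep $2\le i\le cn$ (on which $\lambda_i\le1$) and use $\lambda_i\le c_4\,i/n$ to get order $\sum_{i\le cn}1/i\asymp\log n$. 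Reinstating $\varepsilon^2$ and the constants from the previous step finishes the proof.

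The main obstacle is the spectral input of the third paragraph: the two-sided eigenvalue-counting bounds, uniform in $n$, for lattices and especially their $q$-fuzzes, together with the claim (which the paper defers to an extended study) that the same scalings persist for any graph embeddable in such a lattice. Once these density estimates are available the remainder is routine bookkeeping: the $1/\lambda_i$ factor makes the sum dominated by the bottom of the spectrum, so everything reduces to splitting at $\lambda_i\sim1$ and summing $1/i$ or $1/i^2$.
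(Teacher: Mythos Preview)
Your argument is correct, but it takes a different and somewhat more laborious route than the paper. The paper first observes that on a $d$-dimensional lattice or $q$-fuzz the Laplacian spectrum is uniformly bounded above (by $4b_{\max}$ in 1D, etc.), so $\varphi_i$ is bounded and hence $\frac{1}{1+\varphi_i}$ is pinched between two positive constants independent of $n$; this immediately reduces the noise term to $\frac{\varepsilon^2}{n}\sum_{i\ge 2}\frac{1}{\lambda_i}$. It then recognises $\sum_{i\ge 2}\frac{1}{\lambda_i}=\frac{1}{n}K_f$ as the Kirchhoff index and simply cites the known scalings $K_f\sim n^3$ ($d=1$) and $K_f\sim n^2\log n$ ($d=2$) for lattices and their $q$-fuzzes. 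Your approach instead sandwiches $\varphi_i$ between multiples of $\lambda_i$, reduces to $\sum\frac{1}{\lambda_i(1+\lambda_i)}$, and then evaluates this by deriving two-sided Weyl-type eigenvalue density estimates $\lambda_i\asymp(i/n)^{2/d}$ and summing explicitly. Both are valid; the refinement to $\frac{1}{\lambda_i(1+\lambda_i)}$ is unnecessary here since $\lambda_i$ is bounded and so $1+\lambda_i\asymp 1$, and the spectral density estimates you flag as ``the main obstacle'' are precisely what the Kirchhoff-index result packages up. The paper's route is shorter and defers the hard analysis to a citation; yours is more self-contained and would generalise to situations where $\lambda_i$ is unbounded, at the cost of having to prove the Weyl bounds for $q$-fuzzes yourself. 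Your handling of the power-injection bound is identical in spirit and slightly more explicit than the paper's.
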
 
\vspace{0.6mm}
\begin{proof}
First we need to show that the factors $\frac{1}{1+\varphi_i}$ from~\eqref{eq:H^eta} are uniformly bounded. Recall that $\frac{d\varphi_i}{d\lambda_i} > 0$ for $\lambda_i\geq0$. Thus, $\lambda_\textrm{min} = 0$ and $\lambda_\textrm{max} = \max_i \lambda_i$ give the lower and upper bounds for $\varphi_i$, respectively. In a 1D lattice the largest possible eigenvalue is $4b_{\max}$ and in a 2D lattice it is $16b_{\max}^2$, where $b_{\max} = \max_{(i,j) \in \mathcal{E}^P}b_{ij}$, since the eigenvalues of an unweighted path graph are $\lambda_i = 2(1-\cos\frac{\pi i }{n})$ \cite{brouwer12}, so $0 \le \varphi_i \le \varphi(\lambda_{\max}, \gamma, k,q,\tau)$ and 
 $\frac{1}{1+\varphi_i}$ is therefore uniformly bounded with respect to~$n$. 
  Now, we can bound $\frac{1}{n}|| \mathcal{S}^\DAPI_\eta||_2^2$ as: $\underline{c}\frac{\varepsilon^2}{n}\sum_{i=2}^n \frac{1}{\lambda_i^1}:=\frac{\alpha\varepsilon^2}{2kn\gamma b_{\max} (1+\varphi(\lambda_\textrm{max}))}\sum_{i=2}^n \frac{1}{\lambda_i^1} \leq \frac{1}{n}|| \mathcal{S}^\DAPI_\eta||_2^2 \leq \frac{\alpha\varepsilon^2}{2kn\gamma b_{\min} (1+\varphi(\lambda_\textrm{min}))}\sum_{i=2}^n \frac{1}{\lambda_i^1} =: \bar{c}\frac{\varepsilon^2}{n}\sum_{i=2}^n \frac{1}{\lambda_i^1}$, where $\lambda_i^1$ are the Laplacian eigenvalues of the graph where all weights are equal to 1.
  According to \cite{Klein1993}, the relationship  $\sum_{i=2}^n \frac{1}{\lambda_i^1}=\frac{1}{n}K_f$ holds, where $K_f$ is the Kirchoff index of a graph. We can then write $\frac{\underline{c}\varepsilon^2}{n^2}K_f \leq \frac{1}{n}|| \mathcal{S}^\DAPI_\eta||_2^2 \leq \frac{\bar{c}\varepsilon^2}{n^2}K_f$. It is proven in \cite{Barooah} that the Kirchhoff index for infinite lattices, including $q$-fuzzes, scales like $K_f \sim n^3$ if $d = 1$ and $K_f \sim n^2 \log n$ if $d = 2$. Thus, we get $\frac{1}{n}|| \mathcal{S}^{\mathrm{DAPI}}_\eta||_2^2 \sim \varepsilon^2 n$ if $d=1$ and $\frac{1}{n}||\mathcal{S}^{\mathrm{DAPI}}_\eta||_2^2 \sim \varepsilon^2 \log n$ if $d=2$.
%
\end{proof}
\vspace{1mm}

This result means that while the losses, when evaluated per node, were upper bounded for DAPI in the absence of noise, they may grow unboundedly in large 1- or 2-dimensional lattice networks in the presence of noise. Fig.~\ref{fig:DAPI-scaling} provides a numerical example.
This unbounded growth of the power losses per generator can be understood as caused by the secondary control input becoming increasingly distorted as the network grows. In practice, however, there is clearly a limit on how large the transient losses can become, which depends on the generators' power ratings. The scaling result in Proposition~\ref{prop:scaling} should therefore be interpreted as setting a limit on the feasible network size for each controller tuning. 

\begin{figure}
	\centering
	\begin{tikzpicture}
	\node[inner sep=0pt] (img) at (0,0)
	{\includegraphics[width=0.5\textwidth,trim=3.5cm 12cm 3cm 12cm,clip]{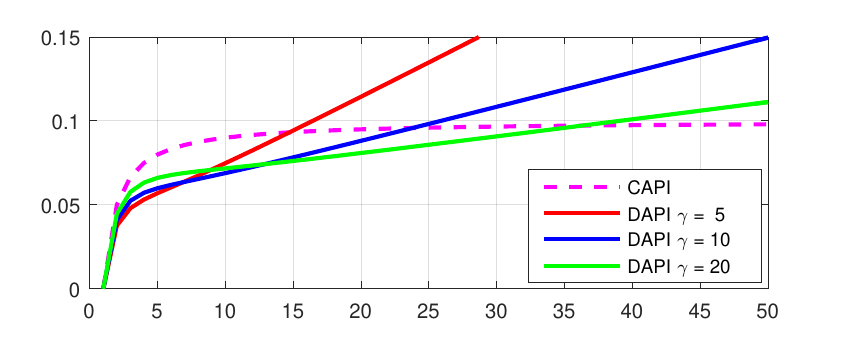}};
	\draw (0, -2) node {Number of nodes $n$};
	\node[xshift=-4.5cm, yshift=0cm ,rotate=90,anchor=center]{Per-node power losses};
	\end{tikzpicture}
	\caption{Per node expected power losses~$\frac{1}{n}||\mathcal{S}||_2^2$ as a function of the network size in a ring graph with CAPI and DAPI for different interaction strengths~$\gamma$. 
	Here, we have set $k=5,~q=\tau=0.8,~\alpha=1,~\varepsilon=0.5$ and $b_{ij}=0.1$, $(i,j)\in \mathcal{E}^P$.}
	\label{fig:DAPI-scaling}
\end{figure}

\subsection{Existence of optimal control design}
\label{sec:existence}
In contrast to previous work, where it was shown that the DAPI strategy always leads to superior performance in comparison to the CAPI approach, this paper's results show that this no longer necessarily applies in the presence of measurement noise. Due to the unfavorable scaling behavior in sparse networks, the expected losses under DAPI control can vastly exceed those under CAPI control. We now show that this, however, can be counteracted by a proper adjustment of the distributed averaging gain parameter $\gamma$.
\vspace{0.3mm}

\begin{proposition}
For any network $\mathcal{G}^P$ it holds that 
\begin{align}
\label{eq:gammaex}
||\mathcal{S}^\DAPI||_2^2 < ||\mathcal{S}^\CAPI||_2^2 ~,~~ \mathrm{if}~~ \hat{\gamma} < \gamma < \infty,
\end{align}
where $\hat{\gamma} = \frac{\varepsilon^2}{\lambda_2}$, and $\lambda_2$ is the smallest non-zero eigenvalue of $\mathcal{L}_B$.  
\end{proposition}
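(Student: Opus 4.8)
The plan is to bound $||\mathcal{S}^\DAPI||_2^2 = ||\mathcal{S}^\DAPI_P||_2^2 + ||\mathcal{S}^\DAPI_\eta||_2^2$ termwise against $||\mathcal{S}^\CAPI||_2^2 = \frac{\alpha}{2k}(n-1)$ by comparing the two sums in~\eqref{eq:dapiPnorm} and~\eqref{eq:H^eta} with the sum $\sum_{i=2}^n 1$ that appears in the CAPI norm. Write $\bar{\chi}_i(\gamma) = \frac{1}{1+\varphi_i(\gamma)}$ and $\underline{\chi}_i(\gamma) = \frac{1}{1+\varphi_i(\gamma)^{-1}}$, so that $\underline{\chi}_i + \bar{\chi}_i = 1$ (as in the proof of Proposition~\ref{prop:gammas}). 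Then
\[
||\mathcal{S}^\DAPI||_2^2 = \frac{\alpha}{2k}\sum_{i=2}^n\left(\underline{\chi}_i(\gamma) + \frac{\varepsilon^2}{\gamma\lambda_i}\,\bar{\chi}_i(\gamma)\right),
\]
so it suffices to show that each summand is strictly less than $1$, i.e.
\[
\underline{\chi}_i(\gamma) + \frac{\varepsilon^2}{\gamma\lambda_i}\,\bar{\chi}_i(\gamma) < 1 = \underline{\chi}_i(\gamma) + \bar{\chi}_i(\gamma),
\]
which, since $\bar\chi_i(\gamma) > 0$, is equivalent to $\frac{\varepsilon^2}{\gamma\lambda_i} < 1$, i.e. $\gamma > \frac{\varepsilon^2}{\lambda_i}$. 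This is the crux of the argument, and it is where the stated threshold $\hat\gamma$ comes from.

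Next I would observe that the condition $\gamma > \varepsilon^2/\lambda_i$ must hold for \emph{every} $i = 2,\ldots,n$, and since $0 < \lambda_2 \le \lambda_3 \le \cdots \le \lambda_n$, the binding constraint is the smallest eigenvalue: $\varepsilon^2/\lambda_i \le \varepsilon^2/\lambda_2 = \hat\gamma$ for all $i \ge 2$. Hence $\gamma > \hat\gamma$ guarantees $\frac{\varepsilon^2}{\gamma\lambda_i} < 1$ for all $i$, and therefore every summand above is strictly smaller than $1$. Summing over $i = 2,\ldots,n$ (a nonempty sum, as $n \ge 2$ for a connected graph on more than one node) yields $||\mathcal{S}^\DAPI||_2^2 < \frac{\alpha}{2k}(n-1) = ||\mathcal{S}^\CAPI||_2^2$ for all $\gamma \in (\hat\gamma, \infty)$, which is exactly~\eqref{eq:gammaex}.

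The argument is essentially a one-line algebraic comparison once the $\underline{\chi}_i + \bar{\chi}_i = 1$ identity is in place, so there is no serious obstacle; the only point requiring a little care is making sure the inequality is \emph{strict} and uniform in $i$, which is handled by passing to $\lambda_2$ and using $\bar\chi_i > 0$ (valid since $\varphi_i$ is finite and positive for $\lambda_i > 0$, $\gamma > 0$). One could additionally note for context that the bound is not tight — $\gamma$ somewhat below $\hat\gamma$ may still satisfy~\eqref{eq:gammaex} because only the \emph{sum} of summands needs to be below $n-1$, not each individually — but this is not needed for the claimed statement and I would leave it as a remark.
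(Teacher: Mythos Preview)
Your proof is correct and follows essentially the same route as the paper's: both write each summand as $\underline{\chi}_i + \tfrac{\varepsilon^2}{\gamma\lambda_i}\bar{\chi}_i$, use $\underline{\chi}_i+\bar{\chi}_i=1$ to reduce the comparison with $1$ to $\tfrac{\varepsilon^2}{\gamma\lambda_i}<1$, and then pass to the worst case $\lambda_2$. Your version is in fact slightly cleaner, since you bound each summand directly rather than introducing the maximizing eigenvalue $\lambda^\star$ as an intermediate step.
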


\begin{proof}
The DAPI \hn norm~\eqref{eq:dapinorm} can be written $||\mathcal{S}^{\DAPI}||_2^2$
$=\frac{\alpha}{2k} \sum_{i=2}^n \left( \frac{1}{1+\varphi_i^{-1}}+\frac{\varepsilon^2}{\gamma\lambda_i} \frac{1}{1+\varphi_i} \right)$. This expression can be upper bounded by $(n-1)$ times the largest summand: $||\mathcal{S}^{\DAPI}||_2^2
\leq \frac{\alpha}{2k} (n-1) \cdot \max_{\lambda_i>0}\left\{ \frac{1}{1+\varphi_i^{-1}}+\frac{\varepsilon^2}{\gamma\lambda_i} \frac{1}{1+\varphi_i} \right\}$. Let $\lambda^\star$ be the maximizing eigenvalue.
We are looking to choose $\gamma$ so that $||\mathcal{S}^\DAPI||_2^2 < ||\mathcal{S}^\CAPI||_2^2 = \frac{\alpha}{2k} (n-1)$ holds. Some simplifications reveal that this holds if $\gamma > \frac{\varepsilon^2}{\lambda^\star}  \geq \frac{\varepsilon^2}{\lambda_2} =: \hat{\gamma}$, and \eqref{eq:gammaex} follows.
\end{proof}
\vspace{1 mm}
Moreover, we already established that there exists a value~$\gamma^\star$ that minimizes the losses for the DAPI controller.  Proposition~\ref{prop:scaling} implies that at this minimum (and for larger $\gamma$), DAPI will perform better than CAPI. Thus, the CAPI performance can still be seen as an upper bound for DAPI. Nonetheless, one should be aware that selecting a too small $\gamma$ in DAPI can lead to much worse performance. 

This result shows that despite the unfavorable scaling of losses due to noise in certain topologies, a distributed strategy is not necessarily worse than the centralized CAPI strategy (it is actually better, for some optimal configuration). However, since $\hat{\gamma}$ depends on $\lambda_2$, it unfortunately means that the choice of $\gamma$ cannot be made independently of the network size for all topologies, and therefore, the algorithm is less scalable. For a very large and sparse network, one would have to revert to a CAPI algorithm as $\hat{\gamma} \rightarrow \infty$.

\section{Conclusions}
\label{sec:conclusions}

We have investigated the performance limitations that arise due to noisy frequency measurements in a distributed secondary frequency control law, namely DAPI. Our main conclusion is that noise may have a large impact on performance, creating a need for very careful controller tuning, whereas the performance of the corresponding centralized controller, CAPI, remains unaffected by noise. 
In principle, our results state that the distributed averaging filter in DAPI carries an increased importance when measurements are noisy -- larger gains and higher network density will reduce the noise impact. We prove that there is an optimal configuration of this filter that allows DAPI to still perform better than CAPI. Therefore, the main conclusions from~\cite{Tegling2016ACC, Tegling2017, Andreasson2017CDC} still hold. A poor configuration on the other hand, can lead to much worse performance in DAPI than in CAPI, and in very large and sparse networks, the poor scalability of the DAPI controller may make it sensible to revert to CAPI.


A relevant extension to this work is to consider separate network topologies for the physical layer and the communication layer in DAPI.
Preliminary analyses reveal that the unfavorable performance scaling observed in sparse networks has bearing on the communication network layer. Therefore, increasing the density in the communication layer alone can be expected to improve performance, in a way similar to increasing interaction strengths. A detailed analysis is part of ongoing work. 

A promising alternative to counteract the performance limitations in DAPI are phasor measurement units (PMUs). These measure phase directly and thereby eliminate the need for integral states that are distorted by noisy frequency measurements. Since the availability of PMUs, however, is likely to be limited, an important question that is subject to ongoing research is how strategical placement of PMUs in the network may mitigate the impact of measurement errors. 


\appendix

\subsection{Alternative noise model}\label{alternative-noise-model}
If the additional noise term enters the dynamic equation as in \eqref{eq:additional_noise_term}, the \hn norm becomes $||\mathcal{\tilde{S}}^{\DAPI}||_2^2 =\frac{a}{2k} \sum_{i=2}^{n}\left( (1+\varepsilon^2)\frac{1}{1+\varphi_i^{-1}} + \varepsilon^2 \left(2+\frac{1}{\gamma \lambda_i} \right) \frac{1}{1+\varphi_i}\right)$. Clearly, the factor 
$(1+\varepsilon^2)$ does not change the qualitative behavior of the first term. 
For the second term it is the scaling in~$n$ that is of most interest~(see Section~\ref{sec:scalability}). This depends on the factor~$\frac{1}{\gamma\lambda_i}$ and the qualitative result of Proposition~\ref{prop:scaling} is not affected by adding the term $2$ to this factor. 


\bibliographystyle{IEEEtran}
\bibliography{EmmasBib15}
\end{document}